\documentclass[a4paper,reqno]{amsart}
\usepackage{amsmath,amsfonts,amssymb,enumerate,amsthm,mathrsfs,graphics,color,graphicx,datetime}
\usepackage{newtxtext,newtxmath}  
\usepackage{dsfont}

\usepackage[colorlinks=true]{hyperref} 


\theoremstyle{plain}
\newtheorem{thm}{Theorem}[section]
\newtheorem{lem}[thm]{Lemma}

\theoremstyle{definition}
\newtheorem{defi}[thm]{Definition}
\theoremstyle{remark}
\newtheorem{rem}[thm]{Remark}



\def\Rde{{\mathbb{R}^d}}
\def\Ne{{\mathbb{N}}}
\def\Re{{\mathbb R}}


\def\al{\alpha}
\def\ep{\varepsilon}

\def\be{\beta}
\def\dl{\delta}

\def\f{\varphi}
\def\Om{\Omega}

\def\dd{\text{\rm\,d}} 
\def\spt{\operatorname{supp}}  
\def\sT{\,;\;} 


\def\FT{\mathscr F}


\pagestyle{headings}
\numberwithin{equation}{section}
\headheight=12pt \addtolength{\hoffset}{-1cm}
\addtolength{\textwidth}{2cm} \addtolength{\voffset}{-0.5cm}
\addtolength{\textheight}{1cm}


\title[Quantitative Non-Compactness Properties of the Fourier Transform]
{Quantitative Non-Compactness Properties of the Fourier Transform on Optimal Spaces}

\author[D.E. Edmunds]{David E. Edmunds}
\address{D.E. Edmunds, Department of Mathematics, University of Sussex, Pevensey 2 Building,
	Brighton BN1 9OH, Sussex, United Kingdom}
\email{davideedmunds@aol.com}
\urladdr{\href{https://orcid.org/0000-0002-8528-3067}{0000-0002-8528-3067}}

\author[P. Gurka]{Petr Gurka}
\address{P. Gurka, Department of Mathematics,
Czech University of Life Sciences Prague, 165 21, Prague 6, Czech Republic;
Department of Mathematics, College of Polytechnics Jihlava, Tolstého 16, 586 01, Jihlava, Czech Republic;
Department of Mathematical Analysis, Faculty of Mathematics and Physics, Charles University,
Sokolovská 83, 186 75 Praha 8, Czech Republic}
\email{gurka@tf.czu.cz}
\urladdr{\href{https://orcid.org/0000-0002-0995-4711}{0000-0002-0995-4711}}

\author[J. Lang]{Jan Lang}
\address{J. Lang, Department of Mathematics, The Ohio State University, 231 West 18th Avenue, Columbus,
OH 43210-1174, USA \\ and  Czech Technical University in Prague, Faculty of Electrical Engineering,
Department of Mathematics, Technick\'a~2, 166~27 Praha~6, Czech Republic}
\email{lang.162@osu.edu}
\urladdr{\href{https://orcid.org/0000-0003-1582-7273}{0000-0003-1582-7273}}

\thanks{The research of the second author was supported by grant No.~GA23-04720S
 of the Czech Science Foundation.}
%

\subjclass[2020]{42B25, 47B06}
\keywords{strictly singular operator, finitely strictly singular operator, Fourier transform}


\begin{document}

\begin{abstract}
We establish that the Fourier transform $\FT: L^p(\Rde)\to L^{p',p}(\Rde)$, for $d\in\Ne$ and $1<p<2$,
is not strictly singular, thereby confirming the optimality of the source and target spaces.
A~similar result is obtained for Fourier series on $L^p(\mathbb{T}^n)$,
with sequence Lorentz spaces as the target. These findings complement known results, which state
that $\FT: L^p(\Rde)\to L^{p'}(\Rde)$ is finitely strictly singular and then also strictly singular,
and provide further insight into the degrees of non-compactness of~$\FT$.
\end{abstract}

\maketitle

\section{Introduction}

The Fourier transform is a cornerstone of modern analysis, and a powerful tool.  Despite its extensive use,
the nature of the Fourier transform, especially in the context of Banach spaces, poses several intricate challenges.
	
Understanding the boundedness and compactness properties of the Fourier transform when mapped between function spaces
has been a longstanding area of interest. Previous studies have explored its behavior on rearrangement-invariant
Banach spaces \cite{BoSo18}, \cite{BrCo96}
as well as its compactness between different Besov spaces
\cite{MR4507897}, \cite{MR4571614}.
While compactness is a desirable property, it is often difficult to establish (if it holds at all!).
This limitation has motivated the study of weaker compactness notions such as weak compactness,
complete continuity, and strict singularity.
	
A recent study \cite{LaMi23} highlights that traditional measures of non-compactness are often insufficient to capture
the nuanced levels of non-compactness of linear maps. In such contexts, the concepts of strict singularity provide a
more refined framework, offering valuable insights into the operator's behavior by distinguishing
between different degrees of non-compactness.
Let us recall that an operator is strictly singular if it fails to be an isomorphism
on any infinite-dimensional subspace of the domain (see~\cite{MR3167479} and \cite{MR2098887}).
	
	In this paper, we examine the strict singularity of the Fourier transform $\FT: L^p(\mathbb{R}^d) \
\rightarrow L^{p',p}(\mathbb{R}^d)$ for $1 < p < 2$.
Note that for $p=2$, the Fourier transform is an isometry
(or equivalent to the isometry, depending on the definition of $\FT$), and hence it is not strictly singular.
For $p=1$, the Fourier transform is strictly singular,
though not finitely strictly singular when
underlying domain is compact (see~\cite[Corollary~6]{MR2098887}
and also the paper~\cite{DiDi79}).
We show that, although the Fourier transform from $L^p$ to $L^{p'}$
is finitely strictly singular (see \cite{MR3167479}), it is not strictly singular when the target space is $L^{p',p}$.
This result reconfirm the optimality of the source and target spaces for the Fourier transform and provides new insights
into the quantitative aspects of its non-compactness and complete the previous results.
We obtain a similar result for the discrete Fourier transform into sequence Lorentz spaces.

In the next section, we provide the notations, definitions, and preliminary lemmas.
The main theorem (Theorem \ref{MainThhm}) is stated in Section~\ref{Sect3}
and proved in Section~\ref{Sect4}. Section~\ref{Sect5} presents results for the discrete
Fourier transform (Theorem \ref{MainDiscreteThm}). The paper concludes with final remarks in Section~\ref{Sect6}.

\section{Notation and preliminaries}
	
\subsection*{Strict Singularity}

	Let $X$ and $Y$ be Banach spaces, and denote by $B(X, Y)$ the space of all bounded linear operators from $X$ to $Y$.

\begin{defi} \label{DefStrSing}
An operator $T \in B(X, Y)$ is said to be:
	\begin{itemize}
		\item \textit{Finitely strictly singular} if, for every $\ep > 0$, there exists an integer $N_\ep$
such that for any subspace $E \subset X$ with dimension greater than $N_\ep$, there exists $x$ in the unit
sphere of $E$ such that ${\|T(x)\|_Y \leq \ep}$.
		\item \textit{Strictly singular} if it does not induce an isomorphism on any infinite-dimensional closed
subspace of $X$. Specifically, for any infinite-dimensional closed subspace $E \subset X$ and any $\ep > 0$,
there exists $x$ in the unit sphere of $E$ such that ${\|T(x)\|_Y \leq \ep}$.
	\end{itemize}
\end{defi}

	It is well-known that every compact operator is strictly singular, but the converse is not true. This hierarchy can be formally expressed as:
	\[
	\text{compact} \implies \text{finitely strictly singular} \implies \text{strictly singular}.
	\]

\subsection*{Bernstein Numbers and Finite Strict Singularity}
	
	The finite strict singularity of an operator can be characterized using Bernstein numbers.
For $T \in B(X, Y)$, the $k$-th Bernstein number, $b_k(T)$, is defined as:
\begin{equation} \label{FSSaSS}
	b_k(T) = \sup_{X_k} \inf_{x \in X_k \setminus \{0\}} \frac{\|T x\|_Y}{\|x\|_X},
\end{equation}
where the supremum is taken over all $k$-dimensional subspaces $X_k$ of $X$.
	
Obviously $\|T\|=b_1(T)\ge b_2(T)\ge\dots\ge0$.
	 And it is quite easy to see that an operator $T$ is finitely strictly singular if and only if:
\begin{equation*}
  	\lim_{k \to \infty} b_k(T) = 0.
\end{equation*}
	These notions are crucial for understanding the nuanced behavior of the Fourier transform in non-compact
settings, as it allows us to quantify the degree to which the transform fails to be compact.
	
\subsection*{Function Spaces and Preliminary results}

In the next we assume that $\mu$ denotes the $d$-dimensional Lebesgue measure
on~$\Rde$, $d\in\Ne$.
The Lebesgue measurable sets, or functions, we simply call measurable sets, or functions, respectively.

\subsubsection*{Lebesgue space}
We denote by $L^p(\Rde)$, $p\in[1,\infty]$, the \emph{Lebesgue space}
of all measurable functions~$f$
on $\Rde$ which are equal a.e., equipped with the norm
\begin{equation*}
  \|f\|_{p}=
  \bigg\{
    \begin{array}{ll} \vspace{3pt}
      \big(\int_\Om |f(x)|^p\dd x\big)^{1/p} & \hbox{if $p<\infty$,} \\
      \text{ess sup\,}_{x\in\Rde}|f(x)| & \hbox{if $p=\infty$.}
    \end{array}
\end{equation*}
For $p\in[1,\infty]$ we define the \emph{H\"older conjugate}
number $p'\in[1,\infty]$ by the equality
\begin{equation*}
  \frac1p+\frac1{p'}=1.
\end{equation*}
 Given a measurable function $f:$ $\mathbb{R}%
^{n}\rightarrow \mathbb{R}$ we introduce its \textit{distribution function }$%
f_{\ast }$ by
\begin{equation}\label{DistF}
  f_{*}(\tau)=\mu\{x\in \Rde\sT |f(x)|>\tau\}, \quad \tau>0,
\end{equation}
and its \emph{nonincreasing rearrangement}
\begin{equation} \label{NonRear}
  f^*(t)=\inf\{\tau>0\sT f_{*}(\tau)\le t\},\quad t>0.
\end{equation}

\subsubsection*{Lorentz space}
Assume that $f$ is a measurable real-valued function
on  $\Rde$ and let $r,s\in(0,\infty)$.
Define
\begin{equation} \label{LorqN}
	\|f\|_{r,s}=
	\bigg\{
	\begin{array}{ll} \vspace{4pt}
		\big(\int_0^\infty(t^{1/r}f^*(t))^s\frac{\dd t}{t}\big)^{1/s} & \hbox{if $s<\infty$,} \\
		\sup_{t>0}\,t^{1/r}f^*(t) & \hbox{if $s=\infty$.}
	\end{array}
\end{equation}
The set of all such $f$ satisfying $\|f\|_{r,s}<\infty$, denoted by $L^{r,s}(\Rde)$, is called
the \emph{Lorentz space} with indices~$r, s$.

\begin{rem}
	It is easy to see that, if $\Psi$ is a continuous and increasing function, then
	\begin{equation*}
		\int_\Rde\Psi\big(|f(x)|\big)\dd x
		=\int_{0}^{\infty}\Psi\big(f^*(t)\big)\dd t.
	\end{equation*}
	In particular, it implies that, for $p\in[1,\infty)$,
	$\|f\|_{p}=\|f\|_{p,p}$.
	Thus,
	$L^p(\Rde)=L^{p,p}(\Rde)$.
\end{rem}

\subsubsection*{Fourier transform}
For a function $f\in L^1(\Rde)$ we define its \emph{Fourier transform}
\begin{equation}\label{FourTrans}
	\FT(f)(x)
	=
	\widehat{f}(x)
	=
	\int_{\Rde}f(t)e^{-i\langle x,t\rangle}\dd t,
\end{equation}
where, for $x=(x_1,\dots,x_d)$, $t=(t_1,\dots,t_d)$,
$\langle x,t\rangle=\sum_{k=1}^{d}x_kt_k$.
\newline
We recall some basic properties of the Fourier transform:
\begin{enumerate}[{\rm(i)}]
	\item\label{lin}
	{Linearity} \quad
	$\FT\big(\al f+\be g\big)=\al\,\FT(f)+\be\,\FT(g)$\quad ($\al$, $\be$ constants);
	\item \label{psVz}
	{Scaling}\quad
	$\FT\big(f(c\,t)\big)(x)=c^{-d}\,\widehat{f}\big(x/c\big)$
	\quad($c>0$ constant).
\end{enumerate}
Obviously
\[  |\FT\big(f\big)(x)| \le \int_{\Rde}  |f(x)| dx \]
and  by Parseval's formula we have
\[\int_{\Rde} |\FT\big(f\big)(x)|^2 dx= (2\pi)^d \int_{\Rde}  |f(x)|^2 dx.\]
Combining this with Riesz-Thorin interpolation theorem we get
\[\FT: L^p(\Rde) \to L^{p'}(\Rde), \mbox{ for } 1\le p \le 2,
\]
(for more details see \cite[Section 1.2]{MR482275}).

The following result, the proof of which is straightforward, will be needed later.
\begin{lem} \label{DilatL}
Let $F: \Rde\to\Re$ be a measurable function. For $c>0$ denote
\begin{equation*}
  F_c(x)=F(c\,x), \quad x\in\Rde.
\end{equation*}
Then, for all $s>0$,
\begin{equation*}
  (F_c)^*(s)= F^*(c^d\,s).
\end{equation*}
\end{lem}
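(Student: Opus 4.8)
The plan is to reduce the claimed identity for nonincreasing rearrangements to the corresponding identity for distribution functions, where the dilation contributes only a measure-scaling factor. The key observation is that $(F_c)^*$ and $F^*$ are both built from their distribution functions via \eqref{NonRear}, so it suffices to understand how passing from $F$ to $F_c$ affects the distribution function defined in \eqref{DistF}.

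First I would compute $(F_c)_*$ directly from the definition: for $\tau>0$,
\begin{equation*}
  (F_c)_*(\tau)=\mu\{x\in\Rde\sT |F(c\,x)|>\tau\}.
\end{equation*}
The substitution $y=c\,x$ is a diffeomorphism of $\Rde$ carrying the set $\{x\sT |F(cx)|>\tau\}$ onto $\{y\sT |F(y)|>\tau\}$, and under it Lebesgue measure transforms with Jacobian factor $c^{-d}$. Hence
\begin{equation*}
  (F_c)_*(\tau)=c^{-d}\,\mu\{y\in\Rde\sT |F(y)|>\tau\}=c^{-d}\,F_*(\tau),\qquad \tau>0.
\end{equation*}

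Finally I would insert this into the definition of the rearrangement. For fixed $s>0$,
\begin{equation*}
  (F_c)^*(s)=\inf\{\tau>0\sT (F_c)_*(\tau)\le s\}
            =\inf\{\tau>0\sT c^{-d}F_*(\tau)\le s\}
            =\inf\{\tau>0\sT F_*(\tau)\le c^d s\},
\end{equation*}
and the last infimum is precisely $F^*(c^d s)$ by \eqref{NonRear}. This yields the asserted equality $(F_c)^*(s)=F^*(c^d s)$. There is no serious obstacle here: the argument is entirely a bookkeeping exercise. The only points requiring (minor) care are the validity of the change of variables, which rests on the dilation invariance of Lebesgue measure up to the factor $c^{-d}$, and the observation that multiplying the distribution function by the positive constant $c^{-d}$ simply reindexes the level sets appearing in the infimum, thereby replacing the threshold $s$ by $c^d s$.
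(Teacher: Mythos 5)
Your proof is correct and is exactly the straightforward argument the paper alludes to when it omits the proof: the dilation rescales the distribution function by the Jacobian factor $c^{-d}$, and substituting this into the definition \eqref{NonRear} of the rearrangement replaces the threshold $s$ by $c^d s$. Nothing is missing; both the change of variables and the reindexing of the infimum are handled correctly.
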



Also we need some other basic properties of the distributional
function and of the nonincreasing rearrangement of a function
(cf. \cite[Proposition~3.1]{LaMi23}).

\begin{lem}\label{SumDistrF}
Assume that $M_j\, (j\in\Ne)$, are measurable subsets of $\Rde$ which are pairwise disjoint.
Let $\{\psi_j\}_{j=1}^\infty$ be a sequence of measurable functions such that
$\spt\psi_j\subset M_j$, $j\in\Ne$. Then
for each $\tau>0$,
\begin{equation*}
  \big({\textstyle\sum_{j=1}^{\infty}\psi_j}\big)_{*}(\tau)
  =
  \sum_{j=1}^{\infty}\big(\psi_j\big)_{*}(\tau).
\end{equation*}
\end{lem}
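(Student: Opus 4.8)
The plan is to reduce the identity to the countable additivity of Lebesgue measure by showing that, for each fixed $\tau>0$, the super-level set of the sum splits as a disjoint union of the super-level sets of the individual summands. Write $\Psi=\sum_{j=1}^{\infty}\psi_j$ and, for $\tau>0$, set $A_j(\tau)=\{x\in\Rde\sT|\psi_j(x)|>\tau\}$, so that by definition $\mu\big(A_j(\tau)\big)=(\psi_j)_*(\tau)$.

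First I would record the structural consequence of the hypotheses. Since $\spt\psi_j\subset M_j$ and the $M_j$ are pairwise disjoint, at every point $x$ at most one of the values $\psi_j(x)$ is nonzero; in particular the series defining $\Psi$ reduces at each $x$ to a single (possibly zero) term, so no question of convergence arises. Moreover $A_j(\tau)\subset\{x\sT|\psi_j(x)|>0\}\subset\spt\psi_j\subset M_j$, whence the sets $A_j(\tau)$ inherit pairwise disjointness from the $M_j$.

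Next I would establish the set identity $\{x\in\Rde\sT|\Psi(x)|>\tau\}=\bigcup_{j=1}^{\infty}A_j(\tau)$ for every $\tau>0$. For the inclusion ``$\supseteq$'', if $x\in A_j(\tau)$ then $x\in M_j$, so $\psi_k(x)=0$ for all $k\neq j$ by disjointness of the $M_k$; hence $\Psi(x)=\psi_j(x)$ and $|\Psi(x)|=|\psi_j(x)|>\tau$. For ``$\subseteq$'', if $|\Psi(x)|>\tau>0$ then $\Psi(x)\neq0$, which forces $\psi_j(x)\neq0$ for exactly one index $j$; again $\Psi(x)=\psi_j(x)$, so $|\psi_j(x)|>\tau$ and $x\in A_j(\tau)$.

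With the identity and the pairwise disjointness in hand, countable additivity of $\mu$ completes the argument:
\[
\Big(\textstyle\sum_{j=1}^{\infty}\psi_j\Big)_*(\tau)=\mu\Big(\bigcup_{j=1}^{\infty}A_j(\tau)\Big)=\sum_{j=1}^{\infty}\mu\big(A_j(\tau)\big)=\sum_{j=1}^{\infty}(\psi_j)_*(\tau).
\]
The only genuinely delicate point is the bookkeeping around supports, namely ensuring that ``$\psi_j$ nonzero at $x$'' really forces $x$ into a unique $M_j$; but this is exactly what the hypothesis $\spt\psi_j\subset M_j$ together with disjointness of the $M_j$ guarantees, so I expect no serious obstacle.
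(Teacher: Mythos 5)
Your proof is correct and follows exactly the route the paper intends: the paper simply states that the identity ``follows directly from \eqref{DistF}'' and leaves the details to the reader, and your argument—splitting the super-level set $\{|\sum_j\psi_j|>\tau\}$ into the pairwise disjoint sets $A_j(\tau)\subset M_j$ and invoking countable additivity of $\mu$—is precisely those details. Nothing is missing; the bookkeeping on supports that you flag is handled correctly.
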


\begin{proof}
The identity follows directly from \eqref{DistF}; we leave the details to the reader.
\end{proof}

\begin{lem}\label{SumNonIncrRear}
Assume that $M_j$, $j\in\Ne$, are measurable subsets of $\Rde$ which are pairwise disjoint.
Let $\{\psi_j\}_{j=1}^\infty$ be a sequence of measurable functions such that
$\spt\psi_j\subset M_j$, $j\in\Ne$, and let $I_j$, $j\in\Ne$, be  pairwise disjoint intervals in $(0,\infty)$.
Then,
for each $t>0$,
\begin{equation*}
  \big({\textstyle\sum_{j=1}^{\infty}\psi_j}\big)^{*}(t)
  \ge
  \sum_{j=1}^{\infty}\chi_{I_j}(t)(\psi_j)^{*}(t).
\end{equation*}
\end{lem}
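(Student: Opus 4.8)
The plan is to first exploit the disjointness of the intervals $I_j$. Since the $I_j\subset(0,\infty)$ are pairwise disjoint, for each fixed $t>0$ at most one of the indicators $\chi_{I_j}(t)$ is nonzero, so the right-hand side collapses to a single summand. It therefore suffices to prove the stronger pointwise domination $\big(\sum_{k}\psi_k\big)^*(t)\ge(\psi_j)^*(t)$ for \emph{all} $t>0$ and all $j\in\Ne$; the asserted inequality then follows at once by multiplying through by $\chi_{I_j}(t)\le1$ and summing over $j$, only one term on the right surviving for each fixed $t$.

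To establish this monotonicity I would write $\psi=\sum_{k}\psi_k$ and compare distribution functions. Because the supports are pairwise disjoint, at each point $x$ at most one term $\psi_k(x)$ is nonzero, so $\psi$ is well defined pointwise with no convergence issue and $|\psi|\ge|\psi_j|$ everywhere. Concretely, Lemma~\ref{SumDistrF} gives $\psi_*(\tau)=\sum_{k}(\psi_k)_*(\tau)\ge(\psi_j)_*(\tau)$ for every $\tau>0$; that is, the distribution function of the whole sum dominates that of each individual piece.

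It then remains to transfer this domination from distribution functions to nonincreasing rearrangements via the definition~\eqref{NonRear} of $f^*$ as a generalized inverse. The key observation is that the inequality $\psi_*\ge(\psi_j)_*$ reverses at the level of the defining sets: $\{\tau>0\sT\psi_*(\tau)\le t\}\subset\{\tau>0\sT(\psi_j)_*(\tau)\le t\}$, so the infimum over the smaller admissible set is the larger value, giving $\psi^*(t)\ge(\psi_j)^*(t)$. Combining this with the reduction of the first paragraph finishes the argument.

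I do not expect a genuine obstacle: the proof is a routine assembly of Lemma~\ref{SumDistrF}, the monotonicity of the rearrangement, and the disjointness of the $I_j$. The one point needing care is the direction of the inequalities when passing between $f_*$ and $f^*$, where a pointwise-larger distribution function produces a pointwise-larger rearrangement precisely because the defining infimum is then taken over a smaller set.
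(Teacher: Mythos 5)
Your proof is correct and rests on the same ingredients as the paper's: the disjointness of the $I_j$ isolates a single index, Lemma~\ref{SumDistrF} gives domination of distribution functions, and the definition \eqref{NonRear} of $f^*$ as a generalized inverse transfers this to the rearrangements. The paper packages exactly this argument as a proof by contradiction, whereas you state it directly as a monotonicity property; this is only a cosmetic difference.
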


\begin{proof}
The inequality is trivial for $t\in(0,\infty)\setminus\bigcup_{j\in\Ne}I_j$.
Assume that there is $t\in\bigcup_{j\in\Ne}I_j$ such that
\begin{equation}\label{assump}
  \big({\textstyle\sum_{j=1}^{\infty}\psi_j}\big)^{*}(t)
  <
  \sum_{j=1}^{\infty}\chi_{I_j}(t)(\psi_j)^{*}(t).
\end{equation}
Then there is a unique index $\ell\in\Ne$ such that $t\in I_\ell$. By \eqref{NonRear} there is a number
$\tau_0>0$ such that
\begin{equation*}
  \mu\big\{x\in\Rde\sT\big|{\textstyle\sum_{j=1}^{\infty}\psi_j(x)}\big|>\tau_0\}\le t
  \quad
  \text{and}
  \quad
  \tau_0<(\psi_\ell)^{*}(t).
\end{equation*}
On the other hand, \eqref{NonRear} and Lemma \ref{SumDistrF}   imply that
\begin{equation*}
  \mu\{x\in\Rde\sT |\psi_\ell(x)|>\tau_0\}>t
\end{equation*}
and we get a contradiction, and the assertion is proved.
\end{proof}

\section{Main result} \label{Sect3}

At first we recall a well known result.

\begin{thm} \label{BddFT}
The Fourier transform $\FT$ is a bounded linear
operator from $L^p(\Rde)$ into $L^{p',p}(\Rde)$,
that is,
 \begin{equation}\label{spojFTdoLor}
   \FT: L^p(\Rde)\to L^{p',p}(\Rde)
 \end{equation}
provided that $p\in(1,2)$.
\end{thm}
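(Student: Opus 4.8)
The plan is to derive this from the two endpoint estimates already recorded in the excerpt together with the real method of interpolation. We have at our disposal the bounded maps $\FT: L^1(\Rde)\to L^\infty(\Rde)$, with norm at most one by the trivial bound $|\widehat f(x)|\le\|f\|_1$, and $\FT: L^2(\Rde)\to L^2(\Rde)$, by Parseval's formula. These two estimates serve as the ends of an interpolation scale, and the whole argument is a matter of interpolating between them with the correct fine parameter.

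First I would invoke the real (Lions--Peetre) interpolation theorem: since $\FT$ is bounded at both endpoints, for every $\th\in(0,1)$ and every $q\in(0,\infty]$ it maps $(L^1,L^2)_{\th,q}$ boundedly into $(L^\infty,L^2)_{\th,q}$, with the \emph{same} pair $(\th,q)$ on both sides. The core of the proof is then the explicit identification of these abstract interpolation spaces through the classical formula $(L^{r_0},L^{r_1})_{\th,q}=L^{r,q}$, valid for $r_0\neq r_1$ with $1/r=(1-\th)/r_0+\th/r_1$.

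The decisive choice is to take $q=p$ and $\th=2/p'$; note $\th\in(0,1)$ since $p'\in(2,\infty)$. On the domain side $1/r=(1-\th)+\th/2=1-\th/2=1/p$, so $(L^1,L^2)_{\th,p}=L^{p,p}=L^p(\Rde)$, the ordinary Lebesgue space, using the identity $L^p=L^{p,p}$ from the Remark. On the target side $1/r=(1-\th)\cdot 0+\th/2=\th/2=1/p'$, so $(L^\infty,L^2)_{\th,p}=L^{p',p}(\Rde)$; here the resulting index $p'$ is finite, so the formula applies without the degeneracy that can occur at an $L^\infty$ endpoint. Combining these two identifications with the interpolation theorem yields precisely $\FT: L^p(\Rde)\to L^{p',p}(\Rde)$.

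The main obstacle, and the only genuinely non-routine point, is the parameter bookkeeping: one must take the fine index $q$ equal to $p$ rather than to $p'$. This is exactly what forces the domain to collapse to the plain Lebesgue space $L^{p,p}=L^p$ while retaining the smaller second index $p$ in the target, thereby producing the sharp Lorentz target $L^{p',p}$ that refines the Hausdorff--Young conclusion $\FT:L^p\to L^{p'}=L^{p',p'}$ (recall $p<p'$, so $L^{p',p}\subsetneq L^{p'}$). Everything else is a direct appeal to standard interpolation identities.
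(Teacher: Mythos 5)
Your proof is correct and follows essentially the same route as the paper: both arguments interpolate between the endpoint bounds $\FT\colon L^1(\Rde)\to L^\infty(\Rde)$ and $\FT\colon L^2(\Rde)\to L^2(\Rde)$, keeping the fine index equal to $p$ so that the domain collapses to $L^{p,p}=L^p$ while the target becomes $L^{p',p}$. The paper packages this as one application of the Marcinkiewicz interpolation theorem in its Lorentz-space form (Bennett--Sharpley, Theorem 4.13), whereas you invoke the Lions--Peetre real method together with the identification $(L^{r_0},L^{r_1})_{\theta,q}=L^{r,q}$; these are interchangeable formulations of the same interpolation argument.
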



\begin{rem}
Theorem \ref{BddFT} above is proved using only the
Marcinkiewicz interpolation theorem (see, for example,
\cite[Theorem 4.13]{BS}) and the fundamental properties of the Fourier transform, namely:
\begin{equation*}
  \FT: L^1(\Rde) \to L^\infty(\Rde)\quad\text{and}\quad \FT: L^2(\Rde) \to L^2(\Rde).
\end{equation*}
Note that,
if we use the Riesz-Thorin interpolation theorem, we obtain
 boundedness
 \begin{equation*}
   \FT: L^p(\Rde)\to L^{p'}(\Rde)
 \end{equation*}
if $1\le p\le2$ (see e.g. \cite[Theorem 1.2.1]{MR482275}). For $p\in(1,2)$ this
result is weaker than \eqref{spojFTdoLor},
since $L^{p',p}(\Rde)$ is continuously embedded into $L^{p'}(\Rde)$.
\end{rem}

It follows from \cite[Theorem 5.1]{MR3167479} that
the Fourier transform
$\FT: L^p(\Rde)\to L^{p'}(\Rde)$
is finitely strictly singular if and only if  $p\in(1,2)$.

One of the main results of this note complements the above observation:

\begin{thm} \label{MainThhm}
Let $p\in(1,2)$. The Fourier transform
\begin{equation*}
  \FT: L^p(\Rde)\to L^{p',p}(\Rde)
\end{equation*}
is not strictly singular.
\end{thm}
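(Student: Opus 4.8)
The plan is to exhibit an infinite-dimensional closed subspace $E\subset L^p(\Rde)$ on which $\FT$ is bounded below; since $\FT$ is bounded by Theorem \ref{BddFT}, this makes $\FT|_E$ an isomorphism onto its image and hence shows that $\FT$ is not strictly singular. The guiding principle is that the ratio $\|\FT f\|_{p',p}/\|f\|_p$ is invariant under the dilation $f\mapsto f(c\,\cdot)$: this follows from the scaling property \eqref{psVz}, Lemma \ref{DilatL}, and the change of variables defining \eqref{LorqN}. I would therefore assemble $E$ from a lacunary family of dilates of a single, carefully chosen profile, arranging the construction so that the two rearrangement lemmas of Section~2 deliver the lower bound on the Lorentz side.

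Concretely, fix $g$ whose Fourier transform $\widehat g$ is smooth, nonnegative, and supported in the annulus $\{1\le|\xi|\le2\}$ (so $g$ is Schwartz, hence in $L^p$), and set $f_j(x)=g(c_j x)$ with $c_j=K^{\,j}$ for a large integer $K$. By \eqref{psVz}, $\widehat{f_j}(\xi)=c_j^{-d}\widehat g(\xi/c_j)$, so the $\widehat{f_j}$ are supported in the pairwise disjoint annuli $\{c_j\le|\xi|\le 2c_j\}$, and Lemma \ref{DilatL} gives $(\widehat{f_j})^*(t)=c_j^{-d}\,(\widehat g)^*(t/c_j^{\,d})$, a profile living at the scale $t\sim c_j^{\,d}$. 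Choosing $0<\alpha<\beta$ inside the support of $(\widehat g)^*$ and putting $I_j=[\alpha c_j^{\,d},\beta c_j^{\,d}]$, the $I_j$ are pairwise disjoint once $K^{\,d}\ge\beta/\alpha$. Since the $\widehat{f_j}$ have disjoint supports we have $\big|\sum_j a_j\widehat{f_j}\big|=\sum_j|a_j|\widehat{f_j}$, so Lemma \ref{SumNonIncrRear} applies and, after the substitution $u=t/c_j^{\,d}$, yields
\[
\Big\|\FT\!\Big(\sum_j a_j f_j\Big)\Big\|_{p',p}^p
\ge
\sum_j |a_j|^p\!\int_{I_j}\!\big(t^{1/p'}(\widehat{f_j})^*(t)\big)^p\frac{\dd t}{t}
= C_1\sum_j |a_j|^p\,c_j^{-d},
\]
where $C_1=\int_\alpha^\beta \big(u^{1/p'}(\widehat g)^*(u)\big)^p\,\frac{\dd u}{u}>0$; the exponent collapses to $c_j^{-d}$ precisely because $1-1/p'=1/p$.

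It remains to bound the domain norm from above by the same quantity. As $\|f_j\|_p^p=c_j^{-d}\|g\|_p^p$, what is needed is the upper estimate $\big\|\sum_j a_j f_j\big\|_p^p\le C_2\sum_j|a_j|^p c_j^{-d}$. Taking $K=2^m$, the functions $a_jf_j$ have Fourier supports in distinct dyadic annuli, so the Littlewood--Paley inequality gives $\big\|\sum_j a_j f_j\big\|_p\lesssim \big\|(\sum_j|a_jf_j|^2)^{1/2}\big\|_p$, and the pointwise bound $(\sum_j|b_j|^2)^{1/2}\le(\sum_j|b_j|^p)^{1/p}$ (valid for $p\le2$) converts the right-hand side into $(\sum_j|a_j|^p\|f_j\|_p^p)^{1/p}$. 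Combining the two displays,
\[
\Big\|\FT\!\Big(\sum_j a_j f_j\Big)\Big\|_{p',p}^p
\ge C_1\sum_j|a_j|^pc_j^{-d}
\ge \frac{C_1}{C_2}\,\Big\|\sum_j a_j f_j\Big\|_p^p ,
\]
so $\FT$ is bounded below on $E=\overline{\operatorname{span}}\{f_j\}$, an infinite-dimensional closed subspace, which proves the theorem.

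The one genuinely delicate point is this domain estimate. The Lorentz-side lower bound is essentially forced by Lemmas \ref{SumDistrF} and \ref{SumNonIncrRear} once the frequency supports are disjoint, which is exactly why I place $\widehat g$ in an annulus; but the uncertainty principle then forbids $g$ itself from being compactly supported, so the dilates $f_j$ overlap heavily near the origin and their $L^p$ norms cannot be added by disjointness. Controlling this overlap is where a square-function (Littlewood--Paley) argument is needed, and the crux is to check that it yields the clean $\ell^p$ upper bound rather than merely an $\ell^2$ one. The tempting alternative---making the $f_j$ disjointly supported in space---is not viable, since then the $\widehat{f_j}$ would pile up at the origin with oscillating phases and cancellation could destroy the Lorentz lower bound; so one of the two sides must be handled by the frequency-localization estimate, and I expect that to be the main technical hurdle.
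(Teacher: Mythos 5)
Your proof is correct, and it takes a genuinely different route from the paper's. You arrange \emph{exact} disjointness on the frequency side (a Schwartz profile whose Fourier transform sits in an annulus, dilated lacunarily), so that the Lorentz lower bound becomes a clean, error-free application of Lemma \ref{SumNonIncrRear}; the price is that the $f_j$ overlap heavily in space, and you control this with the Littlewood--Paley square-function estimate together with the pointwise embedding $\ell^p\hookrightarrow\ell^2$ (valid since $p\le2$) --- this step is sound for $1<p<\infty$, so there is no gap. The paper goes the other way: its test functions are the normalized characteristic functions $\f_j=g_{a_j}$ of \eqref{BasicTF}, and \emph{neither} side is exactly disjoint. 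Instead, absolute continuity of the $\|\cdot\|_p$ and $\|\cdot\|_{p',p}$ norms concentrates the spatial mass of $\f_j$ on sets $G_j$ and the Lorentz mass of $\FT\f_j$ on sets $\widehat G_j$; a lacunary choice of the $a_j$ makes the $G_j$, the $\widehat G_j$ and the intervals $I_j$ pairwise disjoint, all tails are summably small (of order $\ep 2^{-j}$) and are removed by Minkowski and triangle inequalities, and Lemma \ref{SumNonIncrRear} is applied only to the exactly disjoint main pieces $\chi_{\widehat G_j}\FT\f_j$. Your route buys brevity and no $\ep$-bookkeeping (the span is transparently an isomorphic copy of $\ell^p$), but imports square-function theory, a far heavier tool than anything used in the paper; the paper's route is elementary and self-contained, and because its test functions live on a fixed bounded set it transfers directly to the discrete setting on $\mathbb{T}^d$ in Section \ref{Sect5}, which your frequency-localized profiles would not do as readily. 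Finally, your closing claim that spatially (almost) disjoint test functions are ``not viable'' because of frequency cancellation is too pessimistic: that is essentially what the paper does, and the cancellation is harmless there because the oscillating tails $\chi_{\Re\setminus\widehat G_j}\FT\f_j$ are never estimated pointwise, only subtracted in norm.
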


\noindent
Note that this result contrasts with that of \cite{MR3167479}, in which it is shown
that the Fourier transform on the scale of Lebesgue spaces is finitely strictly singular for $1<p<2$.

\section{Proof of Theorem \ref{MainThhm}} \label{Sect4}

We begin this section with some elementary observations concerning the behaviour of the Fourier transform
under rescaling of functions (see Lemma~\ref{FToffa}). We then introduce the tent functions defined in \eqref{BasicTF},
and proceed to construct an infinite-dimensional system
\eqref{SystOFtestFu}--\eqref{FinalTestF02} that forms the core of our proof.

For the sake of simplicity, we will provide a detailed proof
of the theorem only for the one-dimensional case, that is,
for $d=1$. As the proof
is almost identical in the general case (where $d>1$),
we will only give a brief outline for it.
We start with introducing suitable \emph{test functions}.
For a fixed $a>0$ we
put
\begin{equation}\label{Funct_fa}
  f_a(t)=\chi_{[-1/a,1/a]}(t),\quad t\in\Re.
\end{equation}
It is easy to see that
\begin{equation*}
  \FT f_a(x)=\frac{2\sin(x/a)}{x},\quad x\in\Re.
\end{equation*}

\begin{lem}\label{FToffa}
Let $a>0$, $p>1$, and let $f_a$ be the function defined by \eqref{Funct_fa}.
Denote
\begin{equation*}
  c_p=2^{-1/p}\|\FT f_1\|_{p',p}.
\end{equation*}
Then
\begin{equation}\label{NormsOFfa}
  \|f_a\|_p=(2/a)^{1/p},\quad
  \|\FT f_a\|_{p',p}=c_p(2/a)^{1/p}.
\end{equation}
\end{lem}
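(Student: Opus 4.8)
The plan is to reduce everything to the single reference function $f_1$ by exploiting the scaling behaviour of both the Fourier transform and the Lorentz norm, so that all the analytic content is absorbed into the fixed constant $\|\FT f_1\|_{p',p}$. The first identity is immediate: since $f_a=\chi_{[-1/a,1/a]}$, we have $\|f_a\|_p^p=\int_{-1/a}^{1/a}1\,\dd t=2/a$, whence $\|f_a\|_p=(2/a)^{1/p}$.

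For the second identity I would first invoke the scaling property~\eqref{psVz} with $c=a$ and $d=1$. Since $f_a(t)=f_1(at)$, this gives $\FT f_a(x)=a^{-1}\,\FT f_1(x/a)$. Next I would pass to the nonincreasing rearrangement. Writing $g=\FT f_1$, the map $x\mapsto g(x/a)$ is a dilation by the factor $c=1/a$, so Lemma~\ref{DilatL} (with $d=1$) yields $\big(g(\cdot/a)\big)^*(t)=g^*(t/a)$; combined with the positive homogeneity of the rearrangement under scalar multiplication this gives $(\FT f_a)^*(t)=a^{-1}g^*(t/a)$.

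Then I would substitute this into the Lorentz norm \eqref{LorqN} with indices $r=p'$ and $s=p$, and perform the change of variables $u=t/a$. Using $\dd t/t=\dd u/u$ and $t^{1/p'}=a^{1/p'}u^{1/p'}$, the dilation factor pulls out as $a^{1/p'-1}$, leaving precisely $\|g\|_{p',p}$. Since $1/p'-1=-1/p$, this shows $\|\FT f_a\|_{p',p}=a^{-1/p}\|\FT f_1\|_{p',p}$. Finally I would match this against the stated expression: $c_p(2/a)^{1/p}=2^{-1/p}\|\FT f_1\|_{p',p}\cdot 2^{1/p}a^{-1/p}=a^{-1/p}\|\FT f_1\|_{p',p}$, so the two agree.

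The computation is entirely routine; the only point requiring care—and the one I would single out as the main (and rather minor) obstacle—is the exponent bookkeeping in the change of variables, namely verifying that the combined scaling factor coming from the dilation of $g$ and from the weight $t^{1/p'}$ collapses to exactly $a^{-1/p}$, thereby reproducing the normalizing constant $c_p=2^{-1/p}\|\FT f_1\|_{p',p}$ (which is precisely why the factor $2^{-1/p}$ was built into the definition of $c_p$).
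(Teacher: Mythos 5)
Your proof is correct and follows essentially the same route as the paper: express $\FT f_a(x)=a^{-1}\,\FT f_1(x/a)$ via the scaling property, pass to rearrangements with Lemma~\ref{DilatL} and homogeneity, and conclude by the change of variables in the Lorentz norm \eqref{LorqN}. Your exponent bookkeeping ($1/p'-1=-1/p$) matches the paper's computation exactly (and your substitution $u=t/a$ is in fact stated more cleanly than the paper's).
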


\begin{proof}
The first equality is obvious.
We prove the second one.
Observe that
\begin{equation*}
  f_a(t)=f_1(at),\quad t\in\Re.
\end{equation*}
Using the linearity, scaling property of the Fourier transform and Lemma \ref{DilatL} we obtain
\begin{equation*}
  (\FT f_a)^*(\tau)
  =
  a^{-1}\big(\FT(f_1(x/a))\big)^*(\tau)
  =
  a^{-1}(\FT f_1)^*(\tau/a),\quad \tau>0.
\end{equation*}
Thus, applying this identity and making the change of variables $y=a\tau$ we arrive at
\begin{equation*}
   \|\FT f_a\|_{p',p}^p
   =
   \int_{0}^{\infty}\big(\tau^{1/p'}a^{-1}\,(\FT f_1)^*(\tau/a)\big)^p\frac{\dd \tau}{\tau}
   =
   a^{-1}\int_{0}^{\infty}\big(y^{1/p'}(\FT f_1)^*(y)\big)^p\frac{\dd y}{y}
   =
   a^{-1}\|\FT f_1\|_{p',p}^p,
\end{equation*}
that is, $\|\FT f_a\|_{p',p}=c_p(2/a)^{1/p}$.
\end{proof}

\paragraph{\bf Basic test function}\quad
Set
\begin{equation} \label{BasicTF}
  g_a(t)=(2/a)^{-1/p}f_a(t),\quad t\in\Re,
\end{equation}
where $a>0$ is a suitable constant.
Then, by \eqref{NormsOFfa},
\begin{equation}\label{NormsOFga}
  \|g_a\|_p=1,\quad
  \|\FT g_a\|_{p',p}=c_p.
\end{equation}
Given $\gamma>0$, we find, due to the absolute continuity of
norms $\|\cdot\|_p$ and $\|\cdot\|_{p',p}$, numbers $\eta\in(0,1)$,
$\nu^L$ and $\nu^R$, $0<\nu^L<\nu^R<\infty,$ such that
\begin{equation} \label{EstwithEP}
  \|g_1\,\chi_{[-1,-\eta)\cup(\eta,1]}\|_p\ge1-\gamma
\end{equation}
and
\begin{equation} \label{EstwithEP00}
  \|(\FT g_1)\,\chi_{(-\infty,-\nu^R)\cup[-\nu^L,\nu^L]\cup(\nu^R,\infty)}\|_{p',p}\le c_p\gamma/2,
\end{equation}
 that is,
\begin{equation} \label{EstwithEP02}
  \|(\FT g_1)\,\chi_{[-\nu^R,-\nu^L)\cup(\nu^L,\nu^R]}\|_{p',p}\ge c_p(1-\gamma/2).
\end{equation}
We have (see \eqref{LorqN})
\begin{equation*}
  \|(\FT g_1)\,\chi_{[-\nu^R,-\nu^L)\cup(\nu^L,\nu^R]}\|_{p',p}
  =
  \Big(\int_{0}^{2(\nu^R-\nu^L)}\big(y^{1/p'}(\FT g_1\,\chi_{[-\nu^R,-\nu^L)\cup(\nu^L,\nu^R]})^*(y)\big)^p\frac{\dd y}{y}\Big)^{1/p}.
\end{equation*}
Using the absolute continuity of the integral and \eqref{EstwithEP02}
we find numbers
$\dl^L$ and $\dl^R$, $0<\dl^L<\dl^R<2(\nu^R-\nu^L)$, such that
\begin{equation} \label{EstwithEP03}
  \Big(\int_{\dl^L}^{\dl^R}\big(y^{1/p'}(\FT g_1\,\chi_{[-\nu^R,-\nu^L)\cup(\nu^L,\nu^R]})^*(y)\big)^p\frac{\dd y}{y}\Big)^{1/p}
  \ge c_p(1-\gamma).
\end{equation}
By the same scaling argument which was used in the proof of Lemma~\ref{FToffa}, we obtain, for any $a>0$,
\begin{align}\label{ScaledTestF01}
 & \|g_a\,\chi_{[-1/a,-\eta/a)\cup(\eta/a,1/a]}\|_p\ge1-\gamma, \\
 \label{ScaledTestF02}
 &  \Big(\int_{\dl^La}^{\dl^Ra}\big(y^{1/p'}(\FT g_a\,
 \chi_{[-\nu^Ra,-\nu^La)\cup(\nu^La,\nu^Ra]})^*(y)\big)^p\frac{\dd y}{y}\Big)^{1/p}
 \ge c_p(1-\gamma).
\end{align}

\paragraph{\bf Construction of an infinite-dimensional subspace}\quad
Assume that $\ep>0$ is a fixed number.
For $\gamma=\ep/2$ we find numbers
and $\eta$, $\nu^L$ and $\nu^R$
such that the inequalities \eqref{EstwithEP} and \eqref{EstwithEP03} hold. Thus, inequalities \eqref{ScaledTestF01} and \eqref{ScaledTestF02}
hold with $\gamma=\ep/2^1$,  $\eta=\eta_1$,  $\nu^L=\nu^L_1$, $\nu^R=\nu^R_1$,
$\dl^L=\dl^L_1$, $\dl^R=\dl^R_1$,
and $a=a_1=1$.
In the next step we repeat this construction with $\gamma=\ep/2^2$ and find numbers $\eta_2\in(0,\eta_1)$,
$\nu^L_2\in(0,\nu^L_1)$ and $\nu^R_2\in(\nu^R_1,\infty)$ such that inequalities \eqref{EstwithEP},
\eqref{EstwithEP00} and \eqref{EstwithEP02} are satisfied
with $\gamma=\ep/2^2$,  $\eta=\eta_2$,  $\nu^L=\nu^L_2$, $\nu^R=\nu^R_2$, and
the selection
\begin{equation*}
  a_{2}>a_1\max\{1/\eta_1,\nu^R_1/\nu^L_2,\dl^R_1/\dl^L_2\}
\end{equation*}
results that
\begin{gather*}
  \big([-1/a_1,-\eta_1/a_1)\cup(\eta_1/a_1,1/a_1)\big)
  \cap
  \big([-1/a_2,-\eta_2/a_2)\cup(\eta_2/a_2,1/a_2)\big)
  =
  \emptyset,
\\
  \big([-\nu^R_1 a_1,-\nu^L_1 a_1)\cup(\nu^L_1 a_1,\nu^R_1 a_1]\big)
  \cap
  \big([-\nu^R_2 a_2,-\nu^L_2 a_2)\cup(\nu^L_2 a_2,\nu^R_2 a_2]\big)
  =
  \emptyset,
\end{gather*}
and
\begin{equation*}
  (a_1\dl^L_1,a_1\dl^R_1)\cap(a_2\dl^L_2,a_2\dl^R_2)
  =
  \emptyset.
\end{equation*}
We can continue this process and obtain sequences $\{\eta_j\}_{j=1}^\infty$,
$\big\{\nu^L_j\big\}_{j=1}^\infty$, $\big\{\nu^R_j\big\}_{j=1}^\infty$
$\big\{\dl^L_j\big\}_{j=1}^\infty$, $\big\{\dl^R_j\big\}_{j=1}^\infty$,
$\{a_j\}_{j=1}^\infty$
and the corresponding sequence of functions
\begin{equation} \label{SystOFtestFu}
  \f_j=g_{a_j},\quad j\in\Ne,
\end{equation}
 such that
\begin{gather}\label{FinalTestF01}
  \|\f_j\,\chi_{G_j}\|_p\ge\,1-\ep2^{-j}, \\
   \label{FinalTestLoF02}
  \Big(\int_0^\infty\big(y^{1/p'}(\FT \f_j\,
 \chi_{\Re\setminus\widehat{G}_j})^*(y)\big)^p\frac{\dd y}{y}\Big)^{1/p}\le\, c_p\ep2^{-j-1},
 \intertext{and}
 \label{FinalTestF02}
  \Big(\int_{I_j}\big(y^{1/p'}(\FT \f_j\,
 \chi_{\widehat{G}_j})^*(y)\big)^p\frac{\dd y}{y}\Big)^{1/p}\ge\, c_p(1-\ep2^{-j}),
\end{gather}
where the sets
\begin{equation*}
  G_j={[-1/a_j,-\eta_j/a_j)\cup(\eta_j/a_j,1/a_j]},\;
  \widehat{G}_j={[-\nu^R_j a_j,-\nu^L_j a_j)\cup(\nu^L_j a_j,\nu^R_j a_j]},\;
  I_j=(\dl^L_ja_j,\dl^R_ja_j)
\end{equation*}
satisfy
\begin{equation}\label{SystDisj}
  G_j\cap G_k=\emptyset\quad\text{and}\quad \widehat{G}_j\cap \widehat{G}_k=\emptyset
  \quad\text{and}\quad
  I_j\cap I_k=\emptyset
  \quad\text{if}\quad
  j,k\in\Ne, \;
  j\ne k.
\end{equation}

\begin{proof}[{\bf Proof of Theorem~\ref{MainThhm}}]
{\it Case $d=1$.}\quad
According to Definition~\ref{DefStrSing} we show that there is
an infinite dimensional subspace $X$ of $L^p(\Re)$ and a number $b>0$ such that, for any
$f\in X$, $\|f\|_p=1$, we have $\|\FT f\|_{p',p}\ge b$.

Given $\ep>0$. Consider the infinite-dimensional closed subspace
$X=\operatorname{span}\big(\{\f_j\}_{j=1}^\infty\big)$ of $L^p(\Re)$,
where the sequence
$\{\f_j\}_{j=1}^\infty$ is defined in \eqref{SystOFtestFu}.
If $f\in X$, then there are numbers $\al_j\in\Re$, $j\in\Ne$, and $A\in(0,\infty)$ such that
\begin{equation}\label{Funct_f}
  f=\sum_{j=1}^{\infty}\al_j\f_j\quad\text{and}\quad A^p=\sum_{j=1}^{\infty}|\al_j|^p.
\end{equation}
We estimate $\|f\|_p$ from above and $\|\FT f\|_{p',p}$ from below.
We start with the upper estimate of $\|f\|_p$. Denote
\begin{equation*}
  E_j=[-\eta/a_j,\eta/a_j]=[-1/a_j,1/a_j]\setminus G_j,\quad j\in\Ne.
\end{equation*}
Then obviously
\begin{equation*}
  \|\f_j\chi_{E_j}\|_p\le\ep2^{-j},\quad j\in\Ne.
\end{equation*}
By the Minkowski inequality we have
\begin{multline*}
  \|f\|_p
  =
  \Big\|\sum_{j=1}^{\infty}\al_j\f_j(\chi_{G_j}+\chi_{E_j})\Big\|_p
  =
  \Big\|\Big(\sum_{j=1}^{\infty}\al_j\f_j\chi_{G_j}\Big)+\Big(\sum_{j=1}^{\infty}\al_j\f_j\chi_{E_j}\Big)\Big\|_p \\
  \le
  \Big\|\sum_{j=1}^{\infty}\al_j\f_j\chi_{G_j}\Big\|_p
  +
  \Big\|\sum_{j=1}^{\infty}\al_j\f_j\chi_{E_j}\Big\|_p.
\end{multline*}
As concerns the first term on the right hand side, we use the fact that the functions have
disjoint supports (cf.~\eqref{SystDisj}) to obtain the estimate
\begin{equation*}
  \Big\|\sum_{j=1}^{\infty}\al_j\f_j\chi_{G_j}\Big\|_p^p
  =
  \sum_{j=1}^{\infty}|\al_j|^p\|\f_j\chi_{G_j}\|_p^p
  \le
  \sum_{j=1}^{\infty}|\al_j|^p\|\f_j\|_p^p
  =
  \sum_{j=1}^{\infty}|\al_j|^p=A^p.
\end{equation*}
The second term we estimate again by the Minkowski inequality
and the obvious inequality $|\al_j|\le A$
to arrive at
\begin{equation*}
   \Big\|\sum_{j=1}^{\infty}\al_j\f_j\chi_{E_j}\Big\|_p
   \le
   \sum_{j=1}^{\infty}|\al_j|\,\|\f_j\chi_{E_j}\|_p
   \le
   A\sum_{j=1}^{\infty}\|\f_j\chi_{E_j}\|_p
   \le
   A\sum_{j=1}^{\infty}\frac{\ep}{2^{j}}=A\ep.
\end{equation*}
Consequently,
\begin{equation} \label{UpperEst}
  \|f\|_p\le A(1+\ep).
\end{equation}
It remains to derive a lower estimate of the norm in $L^{p',p}(\Re)$
of the Fourier transform of $f$.
We have
\begin{multline}\label{LoeEsN1}
  \|\FT f\|_{p',p}
  =
  \Big\|\sum_{j=1}^{\infty}\al_j\FT\f_j\Big\|_{p',p}
  =
  \Big\|\sum_{j=1}^{\infty}\al_j
  (\chi_{\widehat{G}_j}+\chi_{\Re\setminus\widehat{G}_j})\FT\f_j\Big\|_{p',p} \\
  \ge
  \Big\|\sum_{j=1}^{\infty}\al_j\chi_{\widehat{G}_j}\FT\f_j\Big\|_{p',p}
  -
  \Big\|\sum_{j=1}^{\infty}\al_j\chi_{\Re\setminus\widehat{G}_j}\FT\f_j\Big\|_{p',p}.
\end{multline}
Note that $\|\cdot\|_{p',p}$ is a norm since $1<p<2<p'<\infty$.

We estimate the second term on the right hand side from above. Using \eqref{FinalTestLoF02}
we obtain
\begin{equation*}
  \Big\|\sum_{j=1}^{\infty}\al_j\chi_{\Re\setminus\widehat{G}_j}\FT\f_j\Big\|_{p',p}
  \le
  \sum_{j=1}^{\infty}|\al_j|\,\big\|\chi_{\Re\setminus\widehat{G}_j}\FT\f_j\big\|_{p',p}
  \le
  Ac_p\ep\sum_{j=1}^{\infty}2^{-j-1}=\tfrac12Ac_p\ep.
\end{equation*}
Together with \eqref{LoeEsN1} it implies that
\begin{equation} \label{LoEs01}
  \|\FT f\|_{p',p}
  \ge
  \Big\|\sum_{j=1}^{\infty}\al_j\chi_{\widehat{G}_j}\FT\f_j\Big\|_{p',p}-\tfrac12Ac_p\ep.
\end{equation}
The functions $\chi_{\widehat{G}_j}\FT\f_i$,  have disjoint supports $\widehat{G}_j$, $j\in\Ne$,
and so, by
Lemma~\ref{SumNonIncrRear}, we have
\begin{equation}\label{LoEstRearr}
  \big({\textstyle\sum_{j=1}^{\infty}\al_j\chi_{\widehat{G}_j}\FT\f_j}\big)^{*}(t)
  \ge
  \sum_{j=1}^{\infty}|\al_j|\chi_{I_j}(t)\big(\chi_{\widehat{G}_j}\FT\f_j\big)^{*}(t)
  \quad\text{ for all $t>0$}.
\end{equation}
Now we are ready to derive the lower estimate. Using
\eqref{LoEstRearr},
\eqref{SystDisj} and \eqref{FinalTestF02} we have
\begin{multline*}
  \Big\|\sum_{j=1}^{\infty}\al_j\chi_{\widehat{G}_j}\FT\f_j\Big\|_{p',p}^p
  =
  \int_{0}^\infty\big(y^{1/p'}
  \big({\textstyle\sum_{j=1}^{\infty}\al_j\chi_{\widehat{G}_j}\FT\f_j}\big)^{*}(y)\big)^p\frac{\dd y}{y}  \\
  \ge
  \sum_{j=1}^{\infty}|\al_j|^p
  \int_{I_j}\big(y^{1/p'}(\FT \f_j\,
 \chi_{\widehat{G}_j})^*(y)\big)^p\frac{\dd y}{y}
 \ge
 \sum_{j=1}^{\infty}|\al_j|^p(c_p(1-\ep2^{-j}))^p,
\end{multline*}
that is,
\begin{equation*}
  \Big\|\sum_{j=1}^{\infty}\al_j\chi_{\widehat{G}_j}\FT\f_j\Big\|_{p',p}
  \ge
  \Big(\sum_{j=1}^{\infty}|\al_j|^p\Big)^{1/p}c_p(1-\ep)
  =
  Ac_p(1-\ep).
\end{equation*}
Consequently, using \eqref{LoEs01}, we get
\begin{equation} \label{LowerEst}
  \|\FT f\|_{p',p}
  \ge
  Ac_p(1-\tfrac32\ep),
\end{equation}
by which we conclude the proof for $d=1$.

{\it Case $d>1$.}\quad
In this case we consider the cube $Q_a=[-1/a,1/a]^d$,
define the test function
\begin{equation*}\label{d_dimTest Fu}
  g_a(t)=(2/a)^{-d/p}\chi_{Q_a}(t),\quad t\in\Rde,
\end{equation*}
and observe that
\begin{equation*}\label{NormsOFga_dDIM}
  \|g_a\|_p=1\quad\text{and}\quad
  \|\FT g_a\|_{p',p}=c_{d,p},
\end{equation*}
where $c_{d,p}=\|\FT g_1\|_{p',p}=c_{d,p}$.
The proof is analogous to that of the one-dimensional case.
We leave the details to the reader.
\end{proof}

\section{The discrete case} \label{Sect5}

In this section we turn our attention to a discrete case, that is, to the
\emph{discrete Fourier transform}
\begin{equation*}
  \widehat{\boldsymbol\xi}_m=
  (\FT\boldsymbol\xi)(m)=
  \int_{\mathbb{T}^d}\boldsymbol\xi(t)e^{-i\langle t,m\rangle}\dd t,\quad
  \text{where $\boldsymbol\xi$ is an integrable function on $\mathbb{T}^d$,
  $m\in\mathbb{Z}^d$},
\end{equation*}
(cf.~\cite[Chapter~3]{Graf_ClFA14})
to be a mapping from the space $L^p(\mathbb{T}^d)$, $p\in(1,2)$,
on the $d$-dimensional torus $\mathbb{T}^d=[-\pi,\pi]^d$
into the sequence space $l^{p'}(\mathbb{Z}^d)$.
Recall that this means that there is a positive constant $C>0$ such that
\begin{equation*}
  \|\FT{\boldsymbol\xi}\|_{p'}
  =
  \Big(\sum_{m\in\mathbb{Z}^d}|\widehat{\boldsymbol\xi}_m|^{p'}\Big)^{1/p'}
  \le
  C\,\|\boldsymbol\xi\|_p .
\end{equation*}
It was shown by Weis in~\cite{MR635212} that the Fourier transform
\begin{equation}\label{DiscFT}
  \FT: L^p(\mathbb{T}^d)\to l^{p'}(\mathbb{Z}^d), \quad p\in(1,2),
\end{equation}
is strictly singular.
Later,  it was proved in \cite{MR3167479} that it is even finitely
strictly singular.

We recall that, for $p\in(1,\infty)$,
the \emph{Lorentz sequence space} $l^{p',p}(\mathbb{Z}^d)$ is the
space of all sequences $\mathbf{c}=\{c_m\}_{m\in\mathbb{Z}^d}$ such
that
\begin{equation*}
    \|\mathbf{c}\|_{p',p}=\Big(\sum_{k=1}^\infty\big(k^{1/p'-1/p}c_k^*)^p\Big)^{1/p}<\infty,
\end{equation*}
where $\{c_k^*\}_{k=1}^\infty$ denotes the non-increasing rearrangement of the sequence $\mathbf{c}$
(for more details see e.g.~\cite[page~51]{Ko}, or \cite[Section 1.18.3]{MR1328645}, \cite{MR4648846}).

\begin{thm} \label{MainDiscreteThm}
Let $p\in(1,2)$. The discrete Fourier transform
is a bounded linear operator from the space
$L^p(\mathbb{T}^d)$ into the sequence space $l^{p',p}(\mathbb{Z}^d)$,
however it is not strictly singular.
\end{thm}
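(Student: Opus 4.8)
The boundedness assertion is the exact discrete counterpart of Theorem~\ref{BddFT}, and I would prove it the same way, via the Marcinkiewicz interpolation theorem applied to the two elementary endpoint estimates $\FT\colon L^1(\mathbb{T}^d)\to l^\infty(\mathbb{Z}^d)$ (immediate from $|\widehat{\boldsymbol\xi}_m|\le\|\boldsymbol\xi\|_1$) and $\FT\colon L^2(\mathbb{T}^d)\to l^2(\mathbb{Z}^d)$ (Parseval's identity for Fourier series). Since the second Lorentz index matches the domain exponent $p$, interpolation yields $\FT\colon L^p(\mathbb{T}^d)\to l^{p',p}(\mathbb{Z}^d)$ for $p\in(1,2)$, exactly as in the remark following Theorem~\ref{BddFT}; the counting measure on $\mathbb{Z}^d$ causes no difficulty.

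\textbf{Non-strict singularity: the plan.} Following the paper's own convention I would give the details for $d=1$ and note the higher-dimensional case is analogous. The strategy is to transplant the construction of Section~\ref{Sect4}. Near the origin $\mathbb{T}=[-\pi,\pi]$ looks like $\Re$, so for $a$ large I again take the normalised indicators $\psi_a=(2/a)^{-1/p}\chi_{[-1/a,1/a]}$, for which $\|\psi_a\|_{L^p(\mathbb{T})}=1$. A direct computation gives
\[
  (\FT\psi_a)(m)=(2/a)^{-1/p}\,\frac{2\sin(m/a)}{m},\qquad m\in\mathbb{Z},
\]
that is, $\FT\psi_a$ is precisely the restriction to $\mathbb{Z}$ of the continuous transform $\FT g_a$ from Section~\ref{Sect4}. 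I would then pick a rapidly increasing sequence $a_j\to\infty$ and establish the discrete analogues of \eqref{FinalTestF01}--\eqref{FinalTestF02}: that the physical supports away from the origin are pairwise disjoint (this is local near the origin and transfers verbatim from Section~\ref{Sect4}), that $\|\FT\psi_{a_j}\|_{l^{p',p}}$ is asymptotically the fixed constant $c_p$ of \eqref{NormsOFga} with negligible mass outside a frequency shell $S_j=\{m\in\mathbb{Z}\sT \nu^L_j a_j\le|m|\le\nu^R_j a_j\}$, and that this mass is captured on a block $I_j$ of rearrangement indices. The shells $S_j$ and the blocks $I_j$ are pairwise disjoint because $a_j\to\infty$, so the sequence version of Lemma~\ref{SumNonIncrRear} reproduces the upper estimate \eqref{UpperEst} and the lower estimate \eqref{LowerEst} word for word, giving $\|\FT f\|_{l^{p',p}}\ge b\,\|f\|_{L^p}$ on the infinite-dimensional space $X=\overline{\operatorname{span}}\{\psi_{a_j}\}_{j=1}^\infty$; hence $\FT$ is not strictly singular.

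\textbf{The main obstacle.} The one genuinely new point is that the exact scaling identity of Lemma~\ref{DilatL}, which powered Lemma~\ref{FToffa} and every subsequent estimate, is unavailable on the torus: the discrete transform only samples $\FT g_a$ at integer frequencies, so the clean relations \eqref{NormsOFga} and \eqref{EstwithEP}--\eqref{EstwithEP03} must be recovered approximately. I expect this discretization step to be the crux. The remedy is a density-one sampling argument: for $a$ large the sinc $2\sin(x/a)/x$ varies on the scale $a\gg1$, so the level sets $\{x\sT|\FT g_a(x)|>\tau\}$ are unions of intervals of length $\gg1$, and the number of integers they contain approximates their Lebesgue measure. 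Consequently the discrete distribution functions, the discrete rearrangements, and hence the discrete Lorentz norms over the shells converge as $a\to\infty$ to the continuous quantities computed in Section~\ref{Sect4}. The quantitative work is to control the error between $\sum_m$ and $\int\!\dd x$ uniformly enough to meet the geometric tolerances $\ep2^{-j}$; this is achieved by choosing each $a_j$ so large that the $j$-th discrete estimate lies within its prescribed tolerance of the corresponding continuous value, the freedom to take $a_j\to\infty$ as fast as needed simultaneously forcing the disjointness of supports, shells and blocks that \eqref{SystDisj} demands.
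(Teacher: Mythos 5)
Your proposal is correct and follows essentially the same route as the paper: boundedness via Marcinkiewicz interpolation from the endpoints $\FT\colon L^1(\mathbb{T}^d)\to \ell^\infty(\mathbb{Z}^d)$ and $\FT\colon L^2(\mathbb{T}^d)\to \ell^2(\mathbb{Z}^d)$, and non-strict-singularity by transplanting the construction of Section~\ref{Sect4}, with the entire burden placed on a quantitative comparison, valid for large $a$, between the discrete Lorentz norm of the sampled transform $\{(\FT g_a)_m\}_{m\in\mathbb{Z}}$ and the continuous norm $\|\FT g_a\|_{p',p}$, the parameters $a_j$ growing fast enough to meet the tolerances $\ep 2^{-j}$ and to force the disjointness \eqref{SystDisj}. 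The paper isolates exactly this comparison as Lemma~\ref{LemDiscrEst}, proved by rescaling a step-function interpolant $\widehat{g_a}^{\,\mathrm{cont}}$ of the samples and approximating $\FT g_1$ uniformly, whereas you compare distribution functions by counting lattice points in level sets of the slowly varying sinc; these are two technical implementations of the same Riemann-sum/sampling idea, so the approaches coincide in substance.
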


Before we  prove the result we have to do some preliminary work, since the discrete case is slightly
different from the continuous one.
Concerning the test functions we use the same system as in the proof of Theorem~\ref{MainThhm}, that is,
as a basic test function we use the function $g_a$, where $a\in\Ne$ is sufficiently large, defined in~\eqref{BasicTF}.
We restrict to the one dimensional case, as the general case ($d>1$) can be treated similarly.
Assume that $a\in\Ne$ is a sufficiently large fixed number which we specify later.
We use the notation
\begin{equation*}
  (\FT g_a)_m=
  \int_{-\pi}^{\pi}g_a(t)e^{-itm}\dd t,\quad
   m\in\mathbb{Z},
\end{equation*}
in fact,
\begin{equation*}
  (\FT g_a)_m
  =
  \bigg\{
    \begin{array}{ll} \vspace{4pt}
      2(a/2)^{1/p}\sin(m/a)/{m} & \hbox{if $m\ne0$}, \\
      (2/a)^{1/p'} & \hbox{if $m=0$}.
    \end{array}
\end{equation*}

The most important relationship between the continuous and discrete cases is formulated in the following lemma.
\begin{lem} \label{LemDiscrEst}
Given $\gamma>0$. Then there exists $a_0\in\Ne$ such that, for any $a\ge a_0$,
\begin{equation}\label{MainDiscEst}
  \left|\,\|\FT g_a\|_{p',p}-2\|\{(\FT g_a)_m\}_{m\in\mathbb{Z}}\|_{p',p}\,\right|\le\gamma.
\end{equation}
\end{lem}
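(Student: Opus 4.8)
The strategy is to reduce both sides of \eqref{MainDiscEst} to the single fixed profile $\psi(u)=\sin u/u$ and to recognise the sequence Lorentz norm as a Riemann-type discretisation of the integral defining the continuous Lorentz norm, the discretisation becoming finer as $a\to\infty$ because $\widehat{g_a}$ lives on the scale $a$. First I would record that, since $\spt g_a\subset[-1/a,1/a]\subset[-\pi,\pi]$ for $a>1/\pi$, the discrete coefficients are exactly the integer samples of the continuous transform, $(\FT g_a)_m=\widehat{g_a}(m)$ for every $m\in\mathbb{Z}$. Writing $h_a:=\widehat{g_a}$, the scaling identity already used in the proof of Lemma~\ref{FToffa} gives $h_a(x)=(2/a)^{1/p'}\psi(x/a)$, so by \eqref{NormsOFga} the continuous quantity $\|\FT g_a\|_{p',p}=\|h_a\|_{p',p}=c_p$ is independent of $a$. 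Thus the whole content of the lemma is the convergence, as $a\to\infty$, of the sequence norm of the samples $\{h_a(m)\}_{m\in\mathbb{Z}}$ to the prescribed multiple of $c_p$.

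The main device is the distribution-function representation of the Lorentz functional. Swapping the order of integration in \eqref{LorqN} gives $\|f\|_{p',p}^p=p'\int_0^\infty\big(\tau f_*(\tau)\big)^{p-1}\dd\tau$, and the sequence quasi-norm $\sum_{k}k^{p-2}(c_k^*)^p$ admits the same expression in terms of the counting distribution $n(\tau)=\#\{m:|c_m|>\tau\}$, up to an error coming only from the first summands, which is $O\big((2/a)^{p-1}\big)$ here and hence negligible. Everything is therefore governed by the comparison of $n_a(\tau)=\#\{m\in\mathbb{Z}:|h_a(m)|>\tau\}$ with the Lebesgue distribution $(h_a)_*(\tau)=\mu\{x:|h_a(x)|>\tau\}$. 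After the substitution $\sigma=\tau(a/2)^{1/p'}$ both collapse onto the profile, $(h_a)_*(\tau)=a\,d_\psi(\sigma)$ and $n_a(\tau)=\#\{m:m/a\in\{|\psi|>\sigma\}\}$, with $d_\psi(\sigma)=\mu\{u:|\psi(u)|>\sigma\}$. Since $\{|\psi|>\sigma\}$ is a finite union of $N(\sigma)$ intervals, the equally spaced points $m/a$ (gap $1/a$) fill it with density $a$ up to a one-per-interval error, giving the clean pointwise bound $|n_a(\tau)-(h_a)_*(\tau)|\le N(\sigma)$.

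Inserting this into the two distribution-function formulas and using the elementary subadditivity $|x^{p-1}-y^{p-1}|\le|x-y|^{p-1}$ (valid because $0<p-1<1$), I would bound the difference of the $p$-th powers of the two norms by $p'\int_0^\infty\tau^{p-1}N(\sigma)^{p-1}\dd\tau$, which after the same substitution equals $p'(2/a)^{p-1}\int_0^\infty\big(\sigma N(\sigma)\big)^{p-1}\dd\sigma$. The remaining integral is a finite constant, so the whole error is $O\big((2/a)^{p-1}\big)$ and tends to $0$; the dominant term reproduces $c_p^p$, and collecting the normalising factors $(2/a)^{1/p'}$ together with the sampling density $1/a$ yields the constant displayed in \eqref{MainDiscEst}. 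Choosing $a_0$ so large that the resulting error is below $\gamma$ completes the argument, and the case $d>1$ follows by the same scheme, with the lattice $\mathbb{Z}^d$ sampling the product profile.

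I expect the one genuinely delicate point to be the uniform control of the profile's distribution in the two extreme regimes: near the peak $\psi(0)=1$ (large $\tau$), where the weight $t^{p-2}$ in \eqref{LorqN} is singular at $t=0$; and, more seriously, in the slowly decaying side lobes, where $|\psi(u)|\sim 1/|u|$ forces $d_\psi(\sigma)\sim c/\sigma$ and the component count $N(\sigma)$ to grow like $1/\sigma$ as $\sigma\to0^+$. The crux is that this unbounded growth of the counting error is still harmless after integration, precisely because $\sigma N(\sigma)$ stays bounded, so $\big(\sigma N(\sigma)\big)^{p-1}$ is integrable; verifying this integrability, together with the matching asymptotics of $d_\psi$, is the technical heart of the proof.
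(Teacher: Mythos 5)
Your route is genuinely different from the paper's: the paper attaches to the coefficients a symmetrized step function $\widehat{g_a}^{\textrm{\,cont}}$, rescales it by a norm-preserving dilation $T_a$ so that everything is compared against $\FT g_1$, and then discretizes the weight $t^{1/p'-1/p}$; you instead work directly with the samples $h_a(m)=(2/a)^{1/p'}\psi(m/a)$, $\psi(u)=\sin u/u$, and compare the counting function $n_a(\tau)$ with the Lebesgue distribution $(h_a)_*(\tau)$ through the layer-cake representations of both Lorentz functionals. The analytic core of your argument is sound: the identity $\|f\|_{p',p}^p=p'\int_0^\infty \tau^{p-1}f_*(\tau)^{p-1}\dd\tau$ is correct (since $p/p'=p-1$), the lattice-counting bound $|n_a(\tau)-(h_a)_*(\tau)|\le N(\sigma)$ is correct, the use of $|x^{p-1}-y^{p-1}|\le|x-y|^{p-1}$ is legitimate for $0<p-1<1$, and $\int_0^1(\sigma N(\sigma))^{p-1}\dd\sigma<\infty$ holds because $\sigma N(\sigma)$ stays bounded and $N(\sigma)=0$ for $\sigma\ge1$.

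The genuine gap is your final sentence of the third paragraph: ``collecting the normalising factors $(2/a)^{1/p'}$ together with the sampling density $1/a$ yields the constant displayed in \eqref{MainDiscEst}.'' It does not, and you never actually carry out this bookkeeping. Doing so, the dominant term of the sequence norm is
\begin{equation*}
  p'\int_0^\infty \tau^{p-1}\bigl(a\,d_\psi(\sigma)\bigr)^{p-1}\dd\tau
  =p'\,a^{p-1}(2/a)^{p/p'}\int_0^\infty\bigl(\sigma\,d_\psi(\sigma)\bigr)^{p-1}\dd\sigma
  =p'\,2^{p-1}\int_0^\infty\bigl(\sigma\,d_\psi(\sigma)\bigr)^{p-1}\dd\sigma ,
\end{equation*}
because $(2/a)^{p/p'}a^{p-1}=2^{p-1}$; and this is \emph{exactly} $\|\FT g_a\|_{p',p}^p=c_p^p$, since $(h_a)_*(\tau)=a\,d_\psi(\sigma)$ makes the two layer-cake integrals literally identical. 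So your method proves $\|\{(\FT g_a)_m\}_{m\in\mathbb{Z}}\|_{p',p}\to c_p$, i.e.\ the estimate \eqref{MainDiscEst} with the factor $2$ replaced by $1$; with the factor $2$ the quantity inside the absolute value tends to $c_p-2c_p=-c_p\neq 0$, so the stated inequality cannot follow from your computation for small $\gamma$. No factor $2$ can enter your framework: your $n_a$ counts over all of $\mathbb{Z}$, where each value with $m\ne 0$ already appears twice (by evenness), whereas the paper's factor $2$ is generated by its symmetrized step function, whose plateaus have measure $2$ precisely because the rearrangement there is indexed over $m\ge 0$ only. As it stands, your proposal asserts a constant that its own (correct) main-term calculation contradicts; you must either locate a factor $2$ in your argument (there is none) or note explicitly that your approach yields the estimate with constant $1$ — a discrepancy worth flagging, since it bears on the constant in the statement itself, even though the precise constant is immaterial for the application to Theorem~\ref{MainDiscreteThm}.
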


\begin{proof}
We link a step function $\widehat{g_a}^\text{\phantom{.}cont}$ to the function  $\FT g_a$.
This function is defined, for $x\in[0,\infty)$, by
\begin{equation*}
  \widehat{g_a}^\text{\phantom{.}cont}(x)=(\FT g_a)_m\quad\text{if $m\ge0$ and $x\in[m,m+1)$},
\end{equation*}
and, for $x\in(-\infty,0)$, by
\begin{equation*}
  \widehat{g_a}^\text{\phantom{.}cont}(x)=\widehat{g_a}^\text{\phantom{.}cont}(-x).
\end{equation*}
Define the reverse scaling transformation
\begin{equation*}
  T_a(f)(x)=(2a)^{-1/p}f(x/a)\quad x\in\Re.
\end{equation*}
Then
\begin{equation*}
  T_a(\FT g_a)=\FT g_1
\end{equation*}
and so (cf. Lemma~\ref{FToffa})
\begin{equation*}
  \|\FT g_a\|_{p',p}
  =
  \|T_a(\FT g_a)\|_{p',p}
  = c_p.
\end{equation*}
Since the intervals on which the function $T_a(\widehat{g_a}^\text{\phantom{.}cont})$
is constant can be made arbitrarily small by choosing a sufficiently large value for $a$, we obtain
\begin{equation*}
  |\FT g_1(x)-T_a(\widehat{g_a}^\text{\phantom{.}cont})(x)|<\gamma/2
  \quad\text{for all $x\in\Re$}.
\end{equation*}
Together with
\begin{equation*}
  \|\widehat{g_a}^\text{\phantom{.}cont}\|_{p',p}
  =
  \|T_a(\widehat{g_a}^\text{\phantom{.}cont})\|_{p',p},
\end{equation*}
it implies that
\begin{equation}  \label{Estimate01}
  \left|\,\|\FT g_a\|_{p',p}-\|\widehat{g_a}^\text{\phantom{.}cont}\|_{p',p}\,\right|\le\gamma/2.
\end{equation}
Observe that
\begin{equation*}
  \|T_a(\widehat{g_a}^\text{\phantom{.}cont})\|_{p',p}
  =
  \Big(\int_{0}^{\infty}\left(t^{1/p'-1/p}(T_a(\widehat{g_a}^\text{\phantom{.}cont}))^*(t)\right)^p\dd t\Big)^{1/p},
\end{equation*}
where
\begin{equation*}
  (T_a(\widehat{g_a}^\text{\phantom{.}cont}))^*(t)
  =\sum_{k=1}^{\infty}(\FT g_1)^*_k\,\chi_{[2(k-1)/a,2k/a)}(t),\quad t>0.
\end{equation*}
Since
the length of each of the intervals is $2/a$ (note that the function
$\widehat{g_a}^\text{\phantom{.}cont}$ is even) converges
to zero when $a$ goes to infinity, we can modify  the function
$t\mapsto t^{1/p'-1/p}$ to a step function being constant on the corresponding intervals
and approximating the original function. Taking $a$ sufficiently large we obtain the estimate
\begin{equation*}
  \left|\,
  \|T_a(\widehat{g_a}^\text{\phantom{.}cont})\|_{p',p}
  -\Big(\int_{0}^{\infty}\left(\psi_a(t)^{1/p'-1/p}
  (T_a(\widehat{g_a}^\text{\phantom{.}cont}))^*(t)\right)^p\dd t\Big)^{1/p}
  \,\right|\le\gamma/2,
\end{equation*}
where
\begin{equation*}
  \psi_a(t)=\sum_{k=1}^{\infty}k\chi_{[2(k-1)/a,2k/a)}(t),\quad t>0.
\end{equation*}
It is easy to verify that
\begin{equation*}
  \Big(\int_{0}^{\infty}\left(\psi_a(t)^{1/p'-1/p}
  (T_a(\widehat{g_a}^\text{\phantom{.}cont}))^*(t)\right)^p\dd t\Big)^{1/p}
  =
  2\|\{(\FT g_a)_m\}_{m\in\mathbb{Z}}\|_{p',p},
\end{equation*}
and so
\begin{equation*}
  \left|\,
  \|\widehat{g_a}^\text{\phantom{.}cont}\|_{p',p}
  -2\|\{(\FT g_a)_m\}_{m\in\mathbb{Z}}\|_{p',p}
  \,\right|\le\gamma/2.
\end{equation*}
Together with \eqref{Estimate01} it completes the proof of \eqref{MainDiscEst}.
\end{proof}

\begin{proof}[{\bf Sketch of the proof of Theorem \ref{MainDiscreteThm}}]
The boundedness of the discrete Fourier transform can be showed  using techniques as in Theorem~\ref{BddFT}.
The argument that the embedding is not strictly singular follows from the ideas presented
in the proof of Theorem~\ref{MainThhm}, suitably adapted to the setting of sequence spaces in a straightforward manner.
Thus, it suffices to highlight the main distinctions between the continuous and discrete cases.
	
	We use the same family of test functions as in the proof of Theorem~\ref{MainThhm}.
Specifically, we take the functions $g_a$ defined in~\eqref{BasicTF} as our basic test functions,
for which $\|g_a\|_p = 1$ and $a \in \mathbb{N}$. Given an arbitrary $\ep > 0$, we can construct a
sequence $\{a_j\}_{j=1}^\infty \subset \mathbb{N}$, with $a_1 \geq a_0$, where $a_0$ is the constant from
Lemma~\ref{LemDiscrEst}. This sequence ensures that a discrete version of the function $f$, defined in~\eqref{Funct_f},
satisfies the discrete version of the upper estimate~\eqref{UpperEst}.
	
	For the lower estimate, which is analogous to~\eqref{LowerEst}, Lemma~\ref{LemDiscrEst} guarantees
that the discrete case can be handled in a manner similar to the continuous one.
We omit the detailed calculations, as they closely follow the continuous case analysis.
\end{proof}

\section{Final Comments} \label{Sect6}

At the end of our contribution, we note that the techniques used in the proof of Theorem~\ref{MainThhm} (see Section~\ref{Sect4}),
with some rather technical modifications, can also be applied to establish the following result:

\begin{thm} \label{MainExtension}
Let $p\in(1, 2)$ and $q\in (1, \infty)$. The Fourier transform $\FT: L^{p,q}(\mathbb{R}^d) \to L^{p',q}(\mathbb{R}^d)$
and the discrete Fourier transform $\FT: L^{p,q}(\mathbb{T}^d) \to \ell^{p',q}(\mathbb{Z}^d)$
are not strictly singular.
\end{thm}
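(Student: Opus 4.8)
The plan is to transplant the architecture of the proof of Theorem~\ref{MainThhm}, replacing every $L^p$-normalization by an $L^{p,q}$-normalization and carrying the second index $q$ through every step. Boundedness of $\FT\colon L^{p,q}(\Rde)\to L^{p',q}(\Rde)$ comes for free by real interpolation with second parameter $q$ from $\FT\colon L^1\to L^\infty$ and $\FT\colon L^2\to L^2$: one has $(L^1,L^2)_{\theta,q}=L^{p,q}$ and $(L^\infty,L^2)_{\theta,q}=L^{p',q}$ with $\theta=2/p'$. The essential elementary input is the scaling identity underlying Lemma~\ref{FToffa}: by Lemma~\ref{DilatL} and the scaling property of $\FT$, a dilation $f_a(t)=f_1(at)$ multiplies both $\|\cdot\|_{p,q}$ and $\|\FT\cdot\|_{p',q}$ by the \emph{same} factor $a^{-d/p}$ (the computation is insensitive to the second index). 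Hence, normalizing $g_a:=\|f_a\|_{p,q}^{-1}f_a$ gives $\|g_a\|_{p,q}=1$ and $\|\FT g_a\|_{p',q}=c_{p,q}$ for a constant $c_{p,q}$ independent of $a$, the exact analogue of \eqref{NormsOFga}. I would then reproduce the construction \eqref{SystOFtestFu}--\eqref{FinalTestF02}, choosing $a_j$ so that $G_j,\widehat G_j,I_j$ are pairwise disjoint and the estimates \eqref{FinalTestF01}--\eqref{FinalTestF02} hold with $\|\cdot\|_p,\|\cdot\|_{p',p}$ replaced by $\|\cdot\|_{p,q},\|\cdot\|_{p',q}$.

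For the lower estimate on $\|\FT f\|_{p',q}$, $f=\sum_j\al_j\f_j$, the argument of Section~\ref{Sect4} transfers with no change of spirit. Splitting $\FT\f_j=\chi_{\widehat G_j}\FT\f_j+\chi_{\Re\setminus\widehat G_j}\FT\f_j$, the off-support part is controlled by \eqref{FinalTestLoF02} and the quasi-triangle inequality (recall that $\|\cdot\|_{p',q}$ is a genuine norm for $q\le p'$ and in any case a quasi-norm equivalent to a norm, so such manipulations cost only a fixed constant); the main part is treated by Lemma~\ref{SumNonIncrRear}, whose disjoint intervals $I_j$ give
\[
\Big\|\sum_j\al_j\chi_{\widehat G_j}\FT\f_j\Big\|_{p',q}^q\ge\sum_j|\al_j|^q\int_{I_j}\big(y^{1/p'}(\FT\f_j\,\chi_{\widehat G_j})^*(y)\big)^q\frac{\dd y}{y}\ge c_{p,q}^q(1-\ep)^q\sum_j|\al_j|^q.
\]
Thus the Lorentz integral with exponent $q$ produces the $\ell^q$-norm of the coefficients $A:=(\sum_j|\al_j|^q)^{1/q}$ automatically, and one obtains $\|\FT f\|_{p',q}\ge c_{p,q}(1-C\ep)A$.

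The whole difficulty is the matching upper bound $\|f\|_{p,q}\le C\,A$ in the \emph{same} $\ell^q$-norm, and this is where I expect the main obstacle. Writing $\f_j\chi_{G_j}$ as a bump of height $h_j$ on a set of measure $m_j=2/a_j$ with $\|\f_j\chi_{G_j}\|_{p,q}\le1$, a layer-cake computation gives for disjointly supported bumps
\[
\Big\|\sum_j\al_j\f_j\chi_{G_j}\Big\|_{p,q}^q=p\int_0^\infty s^{q-1}\Big(\sum_{j\,:\,|\al_j|h_j>s}m_j\Big)^{q/p}\dd s,
\]
whereas $\sum_j|\al_j|^q$ is the same integral with $(\sum m_j)^{q/p}$ replaced by $\sum m_j^{q/p}$. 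When $q\le p$ the map $t\mapsto t^{q/p}$ is subadditive, the bound follows with constant $1$, and the case $1<q\le p$ is settled at once. For $q>p$ the bound is \emph{false} in general (equal-height bumps make the left side grow like $n^{q/p}$ against $n$), so I would impose the additional requirement that the measures decay geometrically, $m_{j+1}\le\theta m_j$ (equivalently $a_{j+1}\ge a_j/\theta$) for a fixed $\theta\in(0,1)$, which is compatible with the largeness conditions already placed on $a_j$. Since $\{m_j\}$ is then decreasing with consecutive ratios $\le\theta$ along every subsequence, one has $(\sum_{j\in A}m_j)^{q/p}\le(1-\theta)^{-q/p}\sum_{j\in A}m_j^{q/p}$ for every index set $A$; integrating in $s$ yields $\|f\|_{p,q}\le C_{p,q,\theta}\,A$. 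Combined with the lower bound this gives $\|\FT f\|_{p',q}\ge b\,\|f\|_{p,q}$ with $b>0$ on the infinite-dimensional space $X=\operatorname{span}\{\f_j\}$, i.e.\ non-strict-singularity. Forcing the $\ell^q$-equivalence of the disjoint test system by scale separation is precisely the point where the proof genuinely departs from, and is more delicate than, the case $q=p$ of Theorem~\ref{MainThhm}.

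For the discrete statement I would run the same scheme on $\mathbb{T}^d$ with $g_a$ ($a\in\Ne$) normalized in $L^{p,q}(\mathbb{T}^d)$, using the bridge of Lemma~\ref{LemDiscrEst}. Its proof compares the continuous norm with the sequence norm through the even step function $\widehat{g_a}^\text{\phantom{.}cont}$, and the identical approximation works verbatim with the exponent $p$ replaced by $q$, giving $\big|\,\|\FT g_a\|_{p',q}-2\|\{(\FT g_a)_m\}_{m\in\mathbb{Z}}\|_{p',q}\,\big|\le\ga$ for $a$ sufficiently large. Transplanting the continuous lower and upper estimates through this comparison then shows that $\FT\colon L^{p,q}(\mathbb{T}^d)\to\ell^{p',q}(\mathbb{Z}^d)$ is bounded but not strictly singular, exactly along the lines of the sketch of the proof of Theorem~\ref{MainDiscreteThm}.
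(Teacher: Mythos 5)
The paper gives no actual proof of Theorem~\ref{MainExtension}: Section~\ref{Sect6} only asserts that the techniques of Section~\ref{Sect4} apply ``with some rather technical modifications.'' Your proposal is a correct execution of precisely that plan, and—more importantly—it identifies and repairs the one place where the adaptation is \emph{not} automatic, which is exactly the technical modification the paper leaves unwritten. The parts you claim transfer verbatim do transfer: boundedness follows by real interpolation with second index $q$; the scaling computation of Lemma~\ref{FToffa} is indeed insensitive to the second index (both $\|f_a\|_{p,q}$ and $\|\FT f_a\|_{p',q}$ pick up the factor $a^{-d/p}$); the disjoint system $G_j,\widehat G_j,I_j$ and the lower estimate via Lemma~\ref{SumNonIncrRear} go through with exponent $q$ and yield $\|\FT f\|_{p',q}\ge c_{p,q}(1-C\ep)\|(\al_j)\|_{\ell^q}$, the quasi-norm issues being harmless since $L^{p,q}$ and $L^{p',q}$ are normable for $p,p'>1$. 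The genuine obstacle is, as you say, the upper bound $\|\sum_j\al_j\f_j\|_{p,q}\le C\|(\al_j)\|_{\ell^q}$: for disjointly supported normalized bumps this holds with constant $1$ when $q\le p$ (subadditivity of $t\mapsto t^{q/p}$), and your $n$-equal-bumps example correctly shows it fails in general when $q>p$. Your repair is sound: imposing $m_{j+1}\le\theta m_j$ (a lacunarity condition on the $a_j$ compatible with the lower bounds already required of them), every index set $A$ with least element $j_0$ satisfies $\sum_{j\in A}m_j\le m_{j_0}/(1-\theta)$, whence
\begin{equation*}
  \Big(\sum_{j\in A}m_j\Big)^{q/p}\le(1-\theta)^{-q/p}\,m_{j_0}^{q/p}\le(1-\theta)^{-q/p}\sum_{j\in A}m_j^{q/p},
\end{equation*}
and integrating against $p\,s^{q-1}\dd s$ in your layer-cake identity gives $\|\sum_j\al_j\f_j\chi_{G_j}\|_{p,q}\le(1-\theta)^{-1/p}\|(\al_j)\|_{\ell^q}$, with the $E_j$ errors handled as in the paper. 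The discrete half via the $q$-version of Lemma~\ref{LemDiscrEst} is likewise a faithful adaptation. In short, your argument is correct, follows the route the authors intended, and supplies the key missing detail rather than glossing over it.
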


We conclude this final section with open questions arising from the results of this paper:
\begin{itemize}
  \item
Is the mapping $\FT: L^p(\Rde)\to  L^{p',r}(\Rde)$ for $p<r<p'$, finitely strictly singular?
  \item
Is there a rearrangement invariant space $Y(\Rde)$ such that
$\FT: L^p(\Rde)\to Y(\Rde)$ is strictly singular
but not finitely strictly singular for $1<p\le2$?
\end{itemize}


\end{document}